\documentclass[12pt]{article}

\usepackage[top=0.75in]{geometry}
\usepackage{amsmath,amsthm,amssymb}
\usepackage{enumerate}

\newtheorem{thm}{Theorem}

\newtheorem{cor}[thm]{Corollary}
\newtheorem{conj}[thm]{Conjecture}

\theoremstyle{definition}
\newtheorem*{defi}{Definition}

\def\lc{\left\lceil} 
\def\rc{\right\rceil}

\setlength\parindent{0pt}
\setlength{\parskip}{1em}

\begin{document}

\title{Rado Numbers of Regular Nonhomogeneous  Equations}
\author{Thotsaporn ``Aek'' Thanatipanonda\\
Mahidol University International College\\ 
Nakhon Pathom, Thailand}
\date{August 28, 2019}

\maketitle
\thispagestyle{empty}
\vspace{-0.25in}

\begin{abstract}
We consider Rado numbers of the regular equations 
$\mathcal{E}(b)$ of the form
\[   c_1x_1+c_2x_2+\dots+ c_{k-1}x_{k-1} = x_k + b,   \]
where $b \in \mathbb{Z}$ and $c_i \in \mathbb{Z}^{+}$ for all $i$.    
We give the upper bounds and the sufficient condition
for the lower bounds for $t$-color Rado numbers 
$r(\mathcal{E}(b);t)$ in terms of $r(\mathcal{E}(0);t)$
for both $b>0$ and $b<0$. We also give
examples where the exact values of 
Rado numbers are obtained from these results.
\end{abstract}

\section{Introduction}
In 1916 Issai Schur \cite{Schur} showed that for any $t$ colors, $t \geq 1$, there is a 
least positive integer $s(t)$ such that for any $t$-coloring on the interval $[1,s(t)]$,
there must be a monochromatic solution to $x+y=z$ where $x,y$ and $z$ are
positions on the interval. This result is part of Ramsey Theory. 
The numbers $s(t)$ are called \textit{Schur numbers}.
For example $s(2) =5$ 
and the longest possible interval that avoids the mono solution to $x+y=z$ is 
$[1,2,2,1]$ (1 represents red color and 2 represents blue color, for example). 
For 3 colors, $s(3)=14$ and one of the longest 
interval that avoids the mono solution 
to $x+y=z$ is $[1, 2, 2, 1, 3, 3, 3, 3, 3, 1, 2, 2, 1]$. It is also known that
$s(4)=45$ and $s(5)=161.$ We call the equation \textit{$t$-regular} 
if $s(t)$ exists for a given $t$ and \textit{regular} 
if $s(t)$ exists for all $t, \;\ t \geq 1$.

Later on, Richard Rado, a Ph.D. student of Schur, generalized Schur's work to a 
linear homogeneous equation $\sum_{i=1}^k c_ix_i =0$
and found the condition for regularity of these equations, \cite{Rado1,Rado2}. 

\begin{thm}[Rado's Single Equation Theorem]
Let $k \geq 2.$ Let $c_i \in \mathbb{Z}-\{0\},
1 \leq i \leq k,$ be constants. Then
\[  \sum_{i=1}^k c_ix_i =0\]
is regular if and only if there exists a nonempty 
set $D \subseteq \{c_i, \;\ 1 \leq i \leq k \}$
such that $\sum_{d \in D} d =0$.
\end{thm}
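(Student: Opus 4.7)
The plan is to prove the two directions by distinct techniques: an explicit multiplicative coloring handles the necessity direction, while van der Waerden's theorem on arithmetic progressions underlies the sufficiency direction.

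\textbf{Necessity (no vanishing subset $\Rightarrow$ not regular).} Assume no nonempty subset of $\{c_1,\ldots,c_k\}$ sums to zero, and choose a prime $p > \sum_i |c_i|$. Color $n \in \N$ by $\chi(n) = m \bmod p$, where $n = p^a m$ with $\gcd(m,p) = 1$; this uses $p-1$ colors. Suppose a monochromatic solution $x_i = p^{a_i} m_i$ existed, with $m_i \equiv c \pmod{p}$ for a common nonzero residue $c$. Let $a^* = \min_i a_i$ and $I = \{i : a_i = a^*\}$. Dividing $\sum_i c_i x_i = 0$ by $p^{a^*}$ and reducing modulo $p$ gives $c \sum_{i \in I} c_i \equiv 0 \pmod{p}$; since $\gcd(c,p) = 1$ and $|\sum_{i \in I} c_i| \le \sum_i |c_i| < p$, the subset $\{c_i : i \in I\}$ would have to sum to zero exactly, a contradiction. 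Hence regularity fails at $t = p-1$ colors.

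\textbf{Sufficiency (vanishing subset $\Rightarrow$ regular).} Let $D \subseteq \{c_1,\ldots,c_k\}$ be nonempty with $\sum_{c \in D} c = 0$; since the $c_i$ are nonzero, $D$ must contain coefficients of both signs. Given any $t$-coloring of $[1,N]$ with $N$ large, apply van der Waerden's theorem to extract a monochromatic arithmetic progression $\{a + \ell d : 0 \le \ell < L\}$, choosing $L$ large and $d \mid a$. Parameterize $x_i = a + r_i d$ with $r_i \geq 0$; then $\sum_i c_i x_i = 0$ becomes the linear Diophantine condition $\sum_i c_i r_i = -(a/d)\sum_i c_i$. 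Setting $r_i = 0$ for $i \notin D$, the task reduces to finding nonnegative $r_i < L$ with $i \in D$ satisfying $\sum_{i \in D} c_i r_i = -(a/d) \sum_i c_i$. Because $D$ contains both a positive and a negative coefficient, a Frobenius-style coin-problem argument shows that the set of attainable sums covers every integer in a wide range, a range that can be guaranteed to contain the target by taking $L$ sufficiently large.

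\textbf{The hard part.} The bulk of the work lies in the sufficiency direction, specifically in producing bounded nonnegative solutions to the Diophantine condition on the $r_i$'s and tracking how $L$ (hence $N$) depends on $a$, $d$, and the $c_i$'s. The argument can be streamlined by separating the degenerate case $\sum_i c_i = 0$ --- where the constant tuple $x_1 = \cdots = x_k = y$ is trivially monochromatic for any coloring and any $y$ --- from the general case where van der Waerden is genuinely needed. The quantitative dependence extracted here yields only a very weak bound on the Rado number, but regularity (mere existence) is all that is required for the theorem.
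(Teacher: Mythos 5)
The paper states this theorem without proof, citing Rado's original papers, so there is no in-paper argument to compare against; your proposal has to stand on its own. The necessity direction does: the coloring $n \mapsto m \bmod p$ (where $n = p^a m$ with $p \nmid m$) for a prime $p > \sum_i |c_i|$ is the standard construction, and your reduction modulo $p$ after dividing out $p^{\min_i a_i}$ correctly forces a nonempty subset of coefficients to sum to zero. That half is complete.

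The sufficiency direction has a genuine gap --- in fact two. First, van der Waerden's theorem hands you a monochromatic progression $a, a+d, \dots, a+(L-1)d$ with no control over the arithmetic relation between $a$ and $d$: you cannot simply ``choose $d \mid a$,'' and neither shifting the starting term nor passing to a sub-progression changes $a \bmod d$. Second, and more seriously, even granting $d \mid a$, the target of your Diophantine condition is $T = -(a/d)\sum_i c_i$, whose size is governed by $a/d$, while the attainable sums $\sum_{i \in D} c_i r_i$ with $0 \le r_i < L$ occupy a window of width $O(L)$ (and are moreover confined to multiples of $\gcd\{c_i : i \in D\}$). Since $L$ must be fixed \emph{before} van der Waerden is invoked, and van der Waerden bounds only $a + (L-1)d \le N$ --- so $a/d$ can be as large as $N$ --- the phrase ``taking $L$ sufficiently large'' is circular: the target can dwarf the attainable window. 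Your degenerate case $\sum_i c_i = 0$ is handled correctly, but that is precisely the case where $T = 0$ and the difficulty evaporates. The standard repair is the classical supplement to van der Waerden: for all $t$, $L$, $\mu$ there is $N$ such that every $t$-coloring of $[1,N]$ yields $a, d$ with $a, a+d, \dots, a+Ld$ \emph{and} $\mu d$ all of one color. Setting $x_i = \mu d$ for $i \notin D$ and $x_i = a + \ell_i d$ for $i \in D$, the hypothesis $\sum_{i \in D} c_i = 0$ cancels the $a$-dependence identically, leaving $\sum_{i \in D} c_i \ell_i = -\mu \sum_{i \notin D} c_i$, a condition involving only quantities fixed in advance; it is solved with $0 \le \ell_i \le L$ by choosing $j \in D$ of the appropriate sign (possible since $D$ contains coefficients of both signs) and $\mu$ a suitable multiple of $|c_j|$.
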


As with Schur numbers,
for a linear equation $\mathcal{E}$, 
we denote by $r(\mathcal{E};t)$ 
the minimal integers, if it exists, such that any $t$-coloring
of $[1,r(\mathcal{E};t)]$ must admit a monochromatic solution to 
$\mathcal{E}.$ The numbers $r(\mathcal{E};t)$ are called 
\textit{$t$-color Rado numbers for equation $\mathcal{E}$.}

An analog to Rado's Theorem 
which gives the regularity condition 
for a linear non-homogeneous equation is given below.

\begin{thm} \label{9.10}
Let $k \geq 2$ and let $b, c_1, c_2, \dots, c_k$ be nonzero
integers. Let $\mathcal{E}(b)$ be the equation
\[     \sum_{i=1}^k c_ix_i = b,\]
and let $s =  \sum_{i=1}^k c_i.$ Then $\mathcal{E}(b)$ is regular
if and only if one of the following conditions holds:
\begin{enumerate}[(i)]
\item $\dfrac{b}{s} \in \mathbb{Z^{+}};$
\item  $\dfrac{b}{s}$ is a negative integer 
and $\mathcal{E}(0)$ is regular.
\end{enumerate}
\end{thm}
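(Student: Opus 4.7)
I will split the proof into sufficiency (each of (i), (ii) implies regularity) and necessity (failure of both implies non-regularity). For sufficiency of (i), I would observe that if $q := b/s$ is a positive integer, then $x_1 = x_2 = \cdots = x_k = q$ is a trivially monochromatic positive-integer solution of $\mathcal{E}(b)$ that lies in every interval $[1,n]$ with $n \geq q$; hence $r(\mathcal{E}(b);t) \leq q$ for every $t$, so $\mathcal{E}(b)$ is regular.

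For sufficiency of (ii), write $b = -as$ with $a \in \mathbb{Z}^{+}$ and assume $\mathcal{E}(0)$ is regular, so $N := r(\mathcal{E}(0);t)$ exists for each $t$. The key device is the affine substitution $x_i = (a+1)y_i - a$, which forces $x_i \geq 1$ whenever $y_i \geq 1$ and satisfies $\sum c_i x_i = (a+1)\sum c_i y_i - as$, so every $\mathcal{E}(0)$-solution in the $y_i$ yields an $\mathcal{E}(b)$-solution in the $x_i$. Given a $t$-coloring $\chi$ of $[1, M]$ with $M \geq (a+1)N - a$, I would pull it back to the $t$-coloring $\chi'(y) := \chi((a+1)y - a)$ on $[1,N]$; regularity of $\mathcal{E}(0)$ yields a monochromatic $y$-solution, which lifts via the substitution to a monochromatic $x$-solution of $\mathcal{E}(b)$ inside $[1,M]$. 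This simultaneously proves regularity and the bound $r(\mathcal{E}(b);t) \leq (a+1)\,r(\mathcal{E}(0);t) - a$.

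For necessity, suppose neither (i) nor (ii) holds. If $s \nmid b$ (which includes $s=0$ since $b \neq 0$), I would use the residue-class coloring $\chi(x) = x \bmod m$, with $m = |s|$ when $s \neq 0$ and $m = |b|+1$ when $s=0$. A monochromatic $k$-tuple with $x_i \equiv r \pmod{m}$ forces $\sum c_i x_i \equiv r s \equiv 0 \pmod{m}$, but $b \not\equiv 0 \pmod{m}$ by construction, so no monochromatic solution exists. If instead $b/s = -a$ with $a \in \mathbb{Z}^{+}$ but $\mathcal{E}(0)$ is not regular, I would fix $t$ and a $t$-coloring $\chi_0$ of a sufficiently long initial segment of $\mathbb{N}$ that avoids monochromatic $\mathcal{E}(0)$-solutions, then set $\chi(x) := \chi_0(x+a)$; any monochromatic solution of $\mathcal{E}(b)$ under $\chi$ would, via $z_i := x_i + a$, translate to a monochromatic solution of $\mathcal{E}(0)$ under $\chi_0$, contradicting the choice of $\chi_0$.

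The main obstacle is choosing the correct reduction for sufficiency (ii): a naive shift $x_i = y_i + a$ converts $\mathcal{E}(0)$-solutions to $\mathcal{E}(b)$-solutions but does not respect the coloring on $[1,M]$, while a pure dilation $x_i = (a+1)y_i$ preserves the homogeneous equation but leaves no room to absorb the $-a$ offset needed to force $x_i \geq 1$. The affine-linear map $x_i = (a+1)y_i - a$ threads the needle --- ensuring $x_i \geq 1$, turning $\mathcal{E}(0)$-solutions into $\mathcal{E}(b)$-solutions, and making the pullback coloring on $[1,N]$ well-defined --- and this reconciliation is the crux of the proof.
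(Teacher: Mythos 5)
The paper never proves this theorem: it is quoted as known background (it is Theorem 9.10 of Landman and Robertson, cited as \cite{LR}), so there is no in-paper argument to compare yours against. On its own merits your proof is correct and complete. Sufficiency of (i) via the constant tuple $x_1=\dots=x_k=b/s$ is exactly right. For sufficiency of (ii), the affine substitution $x_i=(a+1)y_i-a$ does everything you claim: it sends positive integers to positive integers, maps $\mathcal{E}(0)$-solutions to $\mathcal{E}(b)$-solutions since $\sum c_i x_i=(a+1)\sum c_i y_i - as$, and makes the pullback coloring well defined; notably this is the very same map the paper itself uses later, as $f(w)=\left(\frac{b}{s}+1\right)w-\frac{b}{s}$ in the proof of Theorem \ref{up1}, so your argument is in effect the general form of the paper's own quantitative upper bound $r(\mathcal{E}(b);t)\le (a+1)r(\mathcal{E}(0);t)-a$. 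The necessity half also checks out: your case split is exhaustive (if neither (i) nor (ii) holds then either $s=0$, or $s\nmid b$, or $b/s$ is a negative integer with $\mathcal{E}(0)$ not regular), the residue-class coloring $x\bmod m$ kills the first two cases since a monochromatic tuple forces $\sum c_i x_i\equiv 0\pmod m$ while $b\not\equiv 0$, and the shift $z_i=x_i+a$ correctly transports good colorings for $\mathcal{E}(0)$ to good colorings for $\mathcal{E}(b)$ of arbitrary length. One small point worth making explicit: when $s\neq 0$ and $s\nmid b$ you need $|s|\ge 2$ for the mod-$|s|$ coloring to be nontrivial, which indeed holds because $|s|=1$ would force $s\mid b$.
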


We note that it is possible that an equation does not have
a mono solution for a coloring on $\mathbb{Z^+}.$
For example, the coloring $[1,2,1,2,1,2,\dots]$ 
avoids the mono solution to 
the equation $x+y=2b+1$ for any $b \geq 0.$
Also some equations are $t$-regular but not regular.
For example, $3x+y-z=2$ is 2-regular with 
$r(\mathcal{E}; 2)= 8$
but not regular according to Theorem \ref{9.10}.

In this paper, we partially quantify Theorem \ref{9.10}
by giving Rado numbers to equations $\mathcal{E}(\tilde{b})$ 
of the form
\begin{equation} \label{main}
c_1x_1+c_2x_2+\dots+ c_{k-1}x_{k-1} = x_k +\tilde{b},   
\end{equation}
where  $c_i \in \mathbb{Z}^{+}$ for all $i$
and $\tilde{b}$ satisfies the condition $(i)$ or $(ii)$ of Theorem \ref{9.10}.
The Rado numbers of \eqref{main} will be written in term of 
the Rado numbers of 
the corresponding homogeneous equation, $\mathcal{E}(0).$

In order to distinguish the Rado numbers of the homogeneous equation 
from those of the non-homogeneous one, we denote by
$R_C(t) = R_{[c_1,c_2,\dots,c_{k-1}]}(t)$ 
the Rado number of the homogeneous equation, $\mathcal{E}(0),$ 
with $t$ colors.

\section{Main Results; case $\tilde{b} < 0$}

We consider the Rado numbers of 
\eqref{main} where 
the constant $\tilde{b}$ is negative. 
Theorem \ref{up1} gives the upper bounds 
and Theorem \ref{low1} gives a 
sufficient condition for the lower bounds.

\begin{thm} \label{up1}
Consider equation $\mathcal{E}(\tilde{b}) = \mathcal{E}(-b)$ of the form 
\[
c_1x_1+c_2x_2+\dots+ c_{k-1}x_{k-1} = x_k - b, \;\ \;\ c_i > 0,\, b > 0.
\]
Let $s = \sum_{i=1}^{k-1} c_i-1$. If $s|b$ and 
$\mathcal{E}(0)$ is $t$-regular then 
\[  r(\mathcal{E}(-b); t) \leq  \left(\frac{b}{s}+1\right)\cdot R_C(t) -\frac{b}{s}.\]
\end{thm}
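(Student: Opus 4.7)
\textbf{Proof plan for Theorem \ref{up1}.} The plan is to reduce any $t$-coloring of the large interval $[1,N]$, with $N=(b/s+1)R_C(t)-b/s$, to a $t$-coloring of the small interval $[1,R_C(t)]$ via an affine change of variables, apply the hypothesis that $\mathcal{E}(0)$ is $t$-regular on the small interval, and then lift the homogeneous monochromatic solution back to a monochromatic solution of $\mathcal{E}(-b)$.

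First, I would look for an affine substitution $x_i=\alpha y_i+\beta$ that converts a solution of $\mathcal{E}(0)$ into a solution of $\mathcal{E}(-b)$. Plugging in and using $\sum_{i=1}^{k-1}c_i=s+1$, the equation $\sum c_i x_i=x_k-b$ becomes $\alpha\sum c_i y_i+\beta(s+1)=\alpha y_k+\beta-b$; since $\sum c_i y_i=y_k$, this collapses to $\beta s=-b$, forcing $\beta=-b/s$ (which is an integer precisely by the hypothesis $s\mid b$). The coefficient $\alpha$ is then free, and the smallest positive integer that keeps every $x_i\geq 1$ when $y_i\geq 1$ is $\alpha=1+b/s$. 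Thus the right map is
\[
f(y)=\bigl(1+\tfrac{b}{s}\bigr)y-\tfrac{b}{s}=y+\tfrac{b}{s}(y-1),
\]
which sends $[1,R_C(t)]$ injectively into $[1,N]$ (indeed $f(1)=1$ and $f(R_C(t))=N$).

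Next, given an arbitrary $t$-coloring $\chi$ of $[1,N]$, I would define the induced coloring $\chi'(y):=\chi(f(y))$ on $[1,R_C(t)]$. By the $t$-regularity of $\mathcal{E}(0)$ and the definition of $R_C(t)$, the coloring $\chi'$ admits a monochromatic solution $y_1,\dots,y_k\in[1,R_C(t)]$ to $\sum_{i=1}^{k-1}c_iy_i=y_k$. Setting $x_i:=f(y_i)$, all the $x_i$ share the same $\chi$-color (by construction of $\chi'$) and lie in $[1,N]$; and by the affine computation already carried out they satisfy $\sum_{i=1}^{k-1}c_ix_i=x_k-b$. This produces a monochromatic solution of $\mathcal{E}(-b)$ in $[1,N]$, which establishes $r(\mathcal{E}(-b);t)\leq N$.

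I do not expect a genuine obstacle here: the whole argument rests on discovering the correct affine map, and once $\beta=-b/s$ and $\alpha=1+b/s$ are pinned down by the two requirements (matching the inhomogeneous term and keeping values positive), the verification is a one-line algebraic check. The only point that needs care is ensuring that $b/s\in\mathbb{Z}$ so that $f$ takes integer values; this is exactly the divisibility hypothesis $s\mid b$. The remainder is bookkeeping on the endpoints of the interval.
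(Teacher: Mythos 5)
Your proposal is correct and follows essentially the same route as the paper: the paper uses exactly the affine map $f(w)=(\tfrac{b}{s}+1)w-\tfrac{b}{s}$, pulls back an arbitrary coloring of $[1,N]$ to $[1,R_C(t)]$, and pushes a monochromatic solution of $\mathcal{E}(0)$ forward to one of $\mathcal{E}(-b)$. Your derivation of $\alpha$ and $\beta$ from the two constraints is a nice piece of motivation that the paper omits, but the argument is the same.
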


\begin{proof} Assume $s|b$ and $\mathcal{E}(0)$ is $t$-regular.
Let $r =  (\frac{b}{s}+1)\cdot R_C(t) -\frac{b}{s}.$

We now show that there is no good coloring on the interval $[1,r].$

Define an injective map $f$ from $[1,R_C(t)]$ to $[1,r]$ by
\[ f(w) = \left(\frac{b}{s}+1\right)\cdot w -\frac{b}{s}.   \]
Notice that the $k$-tuple $(w_1, w_2, \dots, w_{k-1}, \sum_{i=1}^{k-1}c_iw_i)$ 
of the equation 
\[c_1x_1+c_2x_2+\dots+ c_{k-1}x_{k-1} = x_k\] 
is made to correspond to the $k$-tuple
$ (f(w_1), f(w_2), \dots, f(w_{k-1}), f(\sum_{i=1}^{k-1}c_iw_i))$ 
in \[c_1x_1+c_2x_2+\dots+ c_{k-1}x_{k-1} = x_k - b.\]
Please check it yourself!!!

Now given any coloring $\alpha$ on $[1,r]$, 
we define the coloring $\chi$ on the interval $[1,R_C(t)]$ by 
\[   \chi(w) := \alpha( f(w) )  , \;\ \;\ w = 1,2,\dots, R_C(t).   \]
From the definition of the Rado number, any coloring on $[1,R_C(t)]$ must 
contain a mono tuple to $\mathcal{E}(0)$.
Hence there is also a mono 
tuple on $[1, r]$ to $\mathcal{E}(-b)$.
\end{proof}


Next we define a sufficient condition for the lower bounds. 

\begin{defi}[excellence condition]
The coloring on an interval $[1,n]$ satisfies an excellence condition if
it does not contain any mono solution to
\[c_1x_1+c_2x_2+\dots+ c_{k-1}x_{k-1}+j = x_k,\]
for each $j, \;\ 0 \leq j \leq s=  \sum_{i=1}^{k-1} c_i-1$.
\end{defi}

\begin{thm} \label{low1}
Consider the equation  $\mathcal{E}(\tilde{b})=\mathcal{E}(-b)$ of the form
\begin{equation} \label{negb}
c_1x_1+c_2x_2+\dots+ c_{k-1}x_{k-1} = x_k - b, \;\ \;\ 
\text{ where } c_i > 0,\, b > 0.
\end{equation} 
Let $s = \sum_{i=1}^{k-1} c_i-1$. If $s|b$ 
and there is a coloring on the interval $[1,n]$ 
which satisfies an excellence condition then 
\[  r(\mathcal{E}(-b); t) \geq \left(\dfrac{b}{s} +1\right)\cdot n +1.\]
\end{thm}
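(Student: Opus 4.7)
The plan is to construct an explicit $t$-coloring $\alpha$ of the interval $[1, Nn]$, where $N := \frac{b}{s}+1$, that admits no monochromatic solution to $\mathcal{E}(-b)$; this will give $r(\mathcal{E}(-b); t) > Nn$ and hence the desired inequality. The construction is the natural inverse of the scaling $f(w) = Nw - (N-1)$ used in Theorem~\ref{up1}: every $m \in [1,Nn]$ has a unique representation $m = Nw - j$ with $w \in [1,n]$ and $j \in \{0, 1, \dots, N-1\}$, so given the hypothesized $\chi$ on $[1,n]$ satisfying the excellence condition, I would define $\alpha(m) := \chi(w) = \chi(\lceil m/N \rceil)$.

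To verify correctness, I would argue by contradiction. Suppose $(x_1, \dots, x_k) \in [1,Nn]^k$ is a monochromatic solution to $\sum_{i=1}^{k-1} c_i x_i = x_k - b$ under $\alpha$. Writing $x_i = N w_i - j_i$ with $w_i \in [1,n]$ and $j_i \in \{0,\dots,N-1\}$, the defining property of $\alpha$ forces $\chi(w_1) = \chi(w_2) = \cdots = \chi(w_k)$. Substituting into the equation and collecting multiples of $N$ on one side gives
\[
w_k - \sum_{i=1}^{k-1} c_i w_i \;=\; \frac{b + j_k - \sum_{i=1}^{k-1} c_i j_i}{N},
\]
so the right-hand side is automatically an integer $q$.

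The next step is to pin down the range of $q$. Since $s \mid b$ we have $b = s(N-1)$, and since $j_k \in [0, N-1]$ and $\sum_{i=1}^{k-1} c_i j_i \in [0, (s+1)(N-1)]$, the numerator lies in $[-(N-1),\,(s+1)(N-1)]$. Dividing by $N$ and using integrality gives $0 \le q \le s$. Hence $(w_1, \dots, w_k)$ is a monochromatic $\chi$-solution to
\[
c_1 w_1 + c_2 w_2 + \dots + c_{k-1} w_{k-1} + q = w_k
\]
with $q \in \{0, 1, \dots, s\}$, contradicting the excellence condition for $\chi$.

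The main obstacle I anticipate is not the construction, which is essentially forced by the upper-bound map $f$, but the bookkeeping that shows $q$ always falls into the range $[0, s]$ covered by the excellence condition. This is precisely where the divisibility hypothesis $s \mid b$ (equivalently $b = s(N-1)$) does real work: it calibrates the numerator so that, after dividing by $N$, the integer $q$ cannot escape the interval $[0, s]$ in either direction. Everything else (existence and uniqueness of the $(w,j)$-decomposition, and the fact that the color classes transfer cleanly from $\chi$ to $\alpha$) is a direct bookkeeping exercise.
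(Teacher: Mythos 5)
Your proposal is correct and follows essentially the same route as the paper: the same block coloring $\alpha(m)=\chi(\lceil m/N\rceil)$ with $N=\frac{b}{s}+1$, the same decomposition $x_i=Nw_i-j_i$, and the same reduction of a hypothetical monochromatic solution to a $\chi$-monochromatic solution of $\sum_i c_iw_i+q=w_k$ with $0\le q\le s$, contradicting the excellence condition. Your bookkeeping is in fact slightly cleaner than the paper's, which expresses the shift as $\lceil (b-\sum c_ie_i)/N\rceil$ and bounds it by $\lceil sb/(b+s)\rceil\le s$ rather than deriving integrality of $q$ directly from the equation.
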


\begin{proof}
Assume $s|b$ and let $\chi$ be the coloring on $[1, n]$ that satisfies 
an excellence condition to the equation
\[c_1x_1+c_2x_2+\dots+ c_{k-1}x_{k-1} + j = x_k,  \;\ \;\ 0 \leq j \leq s.\]
Let $r= (\frac{b}{s}+1)\cdot n +1.$

We show that there is a ``good coloring'' to $\mathcal{E}(-b)$
on the interval $[1, r-1] = [ 1, \left(\frac{b}{s}+1\right)\cdot n ].$

We define the coloring $\alpha$ on  
$[1, \left(\frac{b}{s}+1\right)\cdot n ]$ by
\[ \alpha(i) = \chi \left( \lc \frac{i}{\frac{b}{s}+1}  \rc    \right) .  \]
Basically, we create the coloring by repeating each point 
of the original coloring on the $[1,n]$ interval $\frac{b}{s}+1$ times. 
We now prove the statement by contradiction: 

Assume there is a mono $k$-tuple on 
$[1, \left(\frac{b}{s}+1\right)\cdot n ]$ to equation \eqref{negb}
written in the form
\[     \left(d_1 \left(\frac{b}{s}+1\right)-e_1,  d_2 \left(\frac{b}{s}+1\right)-e_2, \dots,  
d_{k-1} \left(\frac{b}{s}+1\right)-e_{k-1}, 
 \left(\frac{b}{s}+1\right)\cdot \sum_{i=1}^{k-1}c_id_i -\sum_{i=1}^{k-1} c_ie_i +b \right) ,    \]
where  $ 1 \leq d_i \leq n $  for all $i$ and $0 \leq e_i \leq b/s.$

Notice that $\alpha( d_i(\frac{b}{s}+1)-e_i) = \chi(d_i).$
However, by this mapping, we have the mono $k$-tuple
in $\chi$ as
\[  \left( d_1, d_2,  \dots, d_{k-1},  \sum_{i=1}^{k-1} c_id_i  
+ \lc \frac{b-\sum_{i=1}^{k-1} c_ie_i}{\dfrac{b}{s}+1}  \rc  \right)    \] 

But this is a mono solution to
\[ c_1x_1+c_2x_2+\dots+ c_{k-1}x_{k-1} + j= x_k, \] 
for some $j, \;\ 0 \leq  j  \leq   \lc \frac{sb}{b+s}\rc$
which contradicts the excellence condition of $\chi$
we assumed it to have.  
\end{proof}

We note that the upper bounds and lower bounds meet
if there is a good coloring of length $n=R_C(t)-1$ that satisfies
the excellence condition.

\begin{cor} \label{cor1}
Consider the equation $\mathcal{E}(-b),$
\[  x_1+x_2+\dots+ x_{k-1} = x_k- b , \;\ \;\ \text{with } k \geq 2, 
\;\ b >0 \text{ and } (k-2)|b.   \]
We let $m= b/(k-2).$ Then
\[ r(\mathcal{E}(-b);2) = (m+1)(k^2-k-2)+1. \]
\end{cor}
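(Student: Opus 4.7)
The strategy is to match the upper bound coming from Theorem \ref{up1} with a lower bound coming from Theorem \ref{low1}, plugging in the classical Beutelspacher--Brestovansky evaluation $R_{[1,1,\ldots,1]}(2)=k^2-k-1$ for the homogeneous equation $x_1+\cdots+x_{k-1}=x_k$ on two colors. In the setting of the corollary every $c_i$ equals $1$, so $s=k-2$ and $m=b/s$ by definition.

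The upper half is essentially a computation: Theorem \ref{up1} applied with $t=2$ gives $r(\mathcal{E}(-b);2)\leq (m+1)(k^2-k-1)-m$, which rearranges algebraically to $(m+1)(k^2-k-2)+1$.

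For the lower half, Theorem \ref{low1} reduces matters to exhibiting a 2-coloring of $[1,\,k^2-k-2]$ satisfying the excellence condition with $c_i=1$, i.e.\ a coloring avoiding monochromatic solutions to $x_1+\cdots+x_{k-1}+j=x_k$ for every $j\in\{0,1,\ldots,k-2\}$ simultaneously. I would use the standard three-block coloring that witnesses $R_C(2)\geq k^2-k-1$, namely
\[
[1,\,k-2]\ \text{red},\qquad [k-1,\,(k-1)^2-1]\ \text{blue},\qquad [(k-1)^2,\,k^2-k-2]\ \text{red},
\]
and then verify excellence by splitting on the number $a$ of left-hand variables drawn from the high red block. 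A monochromatic blue solution is ruled out immediately because any $k-1$ blue entries already sum to at least $(k-1)^2$, overshooting the blue block no matter what $j$ is added. For a monochromatic red solution, the case $a=0$ confines $\sum x_i+j$ to $[k-1,\,k^2-2k]$, which sits strictly in the blue gap between the two red blocks even after the largest admissible slack $j=k-2$; the case $a\geq 1$ gives $\sum x_i+j\geq(k-2)+(k-1)^2=k^2-k-1$, which already exceeds $n=k^2-k-2$, so no red $x_k$ lies in range.

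The main obstacle is precisely this excellence verification, which is strictly stronger than the condition that yields $R_C(2)=k^2-k-1$. What makes the argument close is a tight numerical coincidence: the maximum left-hand value in the case $a=0,\,j=k-2$ equals $(k-2)k=k^2-2k$, exactly one short of $(k-1)^2$ where the top red block begins. This single-unit gap is exactly what lets the three-block coloring absorb every $j\in\{0,\ldots,k-2\}$ at once, so that the two bounds meet and yield the asserted exact value.
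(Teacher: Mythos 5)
Your proposal is correct and follows essentially the same route as the paper: both apply Theorems \ref{up1} and \ref{low1} with $R_C(2)=k^2-k-1$, and your three-block coloring $[1,k-2]$ red, $[k-1,(k-1)^2-1]$ blue, $[(k-1)^2,k^2-k-2]$ red is exactly the paper's $\chi=[1^{k-2},2^{(k-1)(k-2)},1^{k-2}]$. The only difference is that you actually carry out the excellence verification (correctly, including the tight case $a=0$, $j=k-2$ reaching $k^2-2k=(k-1)^2-1$), whereas the paper merely asserts it.
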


\begin{proof}
It is known (i.e. Theorem 8.23 of \cite{LR}) that 
\[ r(x_1+x_2+\dots+ x_{k-1} = x_k ;2) = k^2-k-1,  \;\ \;\ \text{ for } k\geq 2. \]
The coloring $\chi = [1^{k-2},2^{(k-1)(k-2)},1^{k-2}]$ 
satisfies the excellence condition for each $k$.
The result follows from Theorems \ref{up1} and \ref{low1}. 
\end{proof}

This result agrees with Theorems 9.14 and 9.26 
of \cite{LR} which applies to any 2-coloring 
but for a more general $b$ (not only $(k-2)|b$).
However, our result applies to any $t$-coloring.

\begin{cor}
For $m > 0,$
\begin{align*}
 r(x+y-z = -m ;3) &= 13m+14, \\
 r(x+y+z-w = -2m ;3) &= 42m+43, \\
 r(x_1+x_2+x_3+x_4-x_5 = -3m ;3) &= 93m+94, \\ 
 r(x_1+x_2+x_3+x_4+x_5-x_6 = -4m ;3) &= 172m+173. 
\end{align*}
\end{cor}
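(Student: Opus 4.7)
The plan is to mirror the proof of Corollary~\ref{cor1}, applying Theorems~\ref{up1} and~\ref{low1} with three colors and arranging for the resulting bounds to coincide. In each of the four equations every coefficient $c_i$ equals $1$, so $s = k-2$, and the right-hand side constant is chosen (namely $b = (k-2)m$) precisely so that $b/s = m$. Hence Theorem~\ref{up1} yields the upper bound $(m+1)R_C(3) - m$, provided we know the $3$-color Rado number of the homogeneous equation $x_1+\dots+x_{k-1} = x_k$. For $k=3,4,5,6$ these values are $R_C(3) = 14, 43, 94, 173$ respectively; the first is Schur's $s(3)=14$, and the remaining three I would take from the known table of Rado/Schur-type numbers (cf.\ \cite{LR} and the references therein). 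Plugging in gives exactly $13m+14$, $42m+43$, $93m+94$, $172m+173$.

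For the matching lower bound, by the remark following Theorem~\ref{low1} it suffices, for each $k\in\{3,4,5,6\}$, to exhibit a $3$-coloring of $[1, R_C(3)-1]$ (lengths $13, 42, 93, 172$) that satisfies the excellence condition, i.e.\ avoids monochromatic solutions to
\[ x_1 + x_2 + \dots + x_{k-1} + j = x_k \]
for every $j = 0, 1, \dots, k-2$ simultaneously. Such a coloring is automatically extremal for the homogeneous equation ($j=0$) and additionally evades the ``shifted'' near-Schur equations. Given one, Theorem~\ref{low1} delivers the bound $(m+1)(R_C(3)-1)+1 = (m+1)R_C(3) - m$, meeting the upper bound.

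The main obstacle is the construction of these four excellent colorings. Unlike the uniform witness $[1^{k-2},2^{(k-1)(k-2)},1^{k-2}]$ that worked for all $k$ in the $2$-color Corollary~\ref{cor1}, I do not expect a single clean formula to handle all four $3$-color cases, so each will likely require its own explicit witness. My approach would be to start from a known extremal $3$-coloring of $[1,R_C(3)-1]$ that blocks $\mathcal{E}(0)$ (e.g.\ the classical palindromic Schur coloring for $k=3$) and check whether it also blocks the equations with $j=1,\dots,k-2$; if it does not, perturb it or run a short backtracking search on the finite interval. Once a candidate coloring is displayed, verifying the excellence condition is a finite computation, and the theorem then packages everything into the stated equality.
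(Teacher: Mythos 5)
Your proposal matches the paper's approach exactly: apply Theorems \ref{up1} and \ref{low1} with $t=3$, use the known values $R_C(3)=14,43,94,173$, and supply a $3$-coloring of $[1,R_C(3)-1]$ satisfying the excellence condition for each equation. The paper handles the one obstacle you flag the same way you propose to --- it defers the explicit excellent colorings to an accompanying computer program (for the first two equations) and to Schaal's paper (for the last two), verifying the excellence condition as a finite check.
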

The first result was also mentioned in \cite{RS} and \cite{Schaal2}.
The good colorings (that also satisfy the excellence condition) of the
first two equations can be found by the 
accompanying program \texttt{Schaal}.
The good colorings (that also satisfy the excellence condition)
of the equations $x_1+x_2+x_3+x_4=x_5$
and $x_1+x_2+x_3+x_4+x_5=x_6$
were given in \cite{Schaal}. 

\section{Main Results; case $\tilde{b} > 0$}

We consider the Rado numbers of 
\eqref{main} where 
the constant $\tilde{b}$ is positive. 
Theorem \ref{up2} gives the upper bounds 
and Theorem \ref{low2} gives a 
sufficient condition for the lower bounds.
 
\begin{thm} \label{up2}
Consider the equation  $\mathcal{E}(\tilde{b})=\mathcal{E}(b)$ of the form
\[
c_1x_1+c_2x_2+\dots+ c_{k-1}x_{k-1} = x_k + b, \;\ \;\ 
\text{ where } c_i > 0, \;\ b > 0.
\]
Let $s = \sum_{i=1}^{k-1} c_i-1$. If $s|b$ 
and $\mathcal{E}(0)$ is $t$-regular then 
\[  r(\mathcal{E}(b); t) \leq \dfrac{b}{s}-\lc\dfrac{b}{s \cdot R_C(t)}\rc+1.\]
\end{thm}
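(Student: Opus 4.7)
The plan is to mirror the contrapositive strategy used in Theorem~\ref{up1}: build an affine map $f:[1,R_C(t)]\to [1,r]$ with $r=\tfrac{b}{s}-\lceil\tfrac{b}{s\cdot R_C(t)}\rceil+1$ that carries every $k$-tuple $(w_1,\dots,w_{k-1},\sum c_iw_i)$ satisfying $\mathcal{E}(0)$ into a $k$-tuple satisfying $\mathcal{E}(b)$. Once such an $f$ is in hand, pulling an arbitrary $t$-coloring $\alpha$ of $[1,r]$ back along $f$ via $\chi(w):=\alpha(f(w))$ produces a $t$-coloring of $[1,R_C(t)]$, which by regularity of $\mathcal{E}(0)$ and the definition of $R_C(t)$ must contain a monochromatic $k$-tuple solving $\mathcal{E}(0)$; its $f$-image is then a monochromatic solution of $\mathcal{E}(b)$ sitting inside $[1,r]$, forcing $r(\mathcal{E}(b);t)\leq r$.

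The natural ansatz is $f(w)=aw+M$, where $M=b/s$ is an integer by hypothesis and $a\in\mathbb{Z}$ remains to be chosen. A direct computation gives
\[
\sum_{i=1}^{k-1}c_if(w_i)=a\sum_{i=1}^{k-1}c_iw_i+M\sum_{i=1}^{k-1}c_i=a\sum_{i=1}^{k-1}c_iw_i+(s+1)M,
\]
while $f\!\left(\sum_{i=1}^{k-1} c_iw_i\right)+b=a\sum_{i=1}^{k-1}c_iw_i+M+sM=a\sum_{i=1}^{k-1}c_iw_i+(s+1)M$, so the homomorphism property holds \emph{independently} of $a$. In other words, the translation by $M$ is forced, but the slope $a$ is a free parameter that can be tuned to minimize the target length $r$.

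In contrast to Theorem~\ref{up1}, here the optimal slope is negative: the translate $M$ has already overshot the right-hand side, so I push $a$ as negative as the constraint $f(R_C(t))=aR_C(t)+M\geq 1$ permits, namely $a=-\lfloor(M-1)/R_C(t)\rfloor$. With this choice, the required length is $r=f(1)=M+a=M-\lfloor(M-1)/R_C(t)\rfloor$, and a routine rewrite via the identity $\lfloor(M-1)/R_C(t)\rfloor=\lceil M/R_C(t)\rceil-1$ puts $r$ into the ceiling form stated in the theorem. Injectivity of $f$ is automatic whenever $a\leq -1$; the borderline case $a=0$ (which happens exactly when $M\leq R_C(t)$, giving $r=M$) still goes through, because the pulled-back $\chi$ is then constant and the image of any $\mathcal{E}(0)$-solution collapses to the tuple $(M,M,\dots,M)$, which solves $\mathcal{E}(b)$ since $(s+1)M=M+b$. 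I expect the only genuine subtlety to be recognizing that negative slopes must be allowed—so that the image $f([1,R_C(t)])$ sits in a short interval just below $M$—and then verifying the floor/ceiling identity above; all remaining steps are direct bookkeeping parallel to the proof of Theorem~\ref{up1}.
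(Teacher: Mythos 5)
Your proposal is correct and is essentially the paper's own proof: your map $f(w)=aw+M$ with $a=-\lf (M-1)/R_C(t)\rf=1-\lc M/R_C(t)\rc$ is literally the paper's $f(w)=(R_C(t)-w)m-q+w$ after substituting $M=b/s=R_C(t)m-q$, and your borderline case $a=0$ is exactly the paper's Case $m=1$ handled by the constant tuple $(M,\dots,M)$. The only cosmetic difference is that you derive the slope by observing the homomorphism property holds for every $a$ and then optimizing, whereas the paper fixes the slope up front via the decomposition $b=s(R_C(t)m-q)$.
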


\begin{proof} Assume $s|b$ and $\mathcal{E}(0)$ is $t$-regular.
Let $r = \dfrac{b}{s}-\lc\dfrac{b}{s \cdot R_C(t)}\rc+1.$

We write $b$ as
$b = s\left( R_C(t) \cdot m - q \right)$ 
where $ m = \lc \dfrac{b}{s\cdot R_C(t)} \rc$ 
and $0 \leq q \leq R_C(t)-1$.
Then $r = (R_C(t)-1)\cdot m -q +1$.

We show that there is no good coloring on the interval $[1,r].$

\textbf{Case 1:} $m=1$.\\
Then $r=b/s.$ We have a trivial mono solution to $\mathcal{E}(b)$ 
via $x_1=x_2=x_3=\dots=x_k=r.$

\textbf{Case 2:} $m>1$.\\
Define an injective map $f$ from $[1,R_C(t)]$ to $[1,r]$ by
 \[ f(w) = (R_C(t)-w)\cdot m -q +w.   \]
Notice that a tuple $(w_1, w_2, \dots, w_{k-1}, \sum_{i=1}^{k-1}c_iw_i)$ 
of the equation 
\[c_1x_1+c_2x_2+\dots+ c_{k-1}x_{k-1} = x_k\] 
is made to correspond to the tuple
$ (f(w_1), f(w_2), \dots, f(w_{k-1}), f(\sum_{i=1}^{k-1}c_iw_i))$ 
in \[c_1x_1+c_2x_2+\dots+ c_{k-1}x_{k-1} = x_k + b.\]
Please check it yourself!!!

Now, given any coloring of $\alpha$ on $[1,r]$, 
we define the coloring $\chi$ on the interval $[1,R_C(t)]$ by 
\[   \chi(w) = \alpha( f(w) )  , \;\ \;\ w = 1,2,\dots, R_C(t).   \]
From the definition of the Rado number, any coloring on $[1,R_C(t)]$ must 
contain a mono tuple to $\mathcal{E}(0)$.
Hence there is also a mono 
tuple on $[1, r]$ to $\mathcal{E}(b)$.
In both cases, there is no good coloring on $[1,r].$
\end{proof}


The lower bounds  
can be stated in similar way to Theorem \ref{low1}.

\begin{thm} \label{low2}
Consider the equation  $\mathcal{E}(\tilde{b})=\mathcal{E}(b)$ of the form
\begin{equation} \label{posb}
c_1x_1+c_2x_2+\dots+ c_{k-1}x_{k-1} = x_k + b, \;\ \;\ 
\text{ where } c_i > 0, \;\ b > 0.
\end{equation} 
Let $s = \sum_{i=1}^{k-1} c_i-1$. If $s|b$ 
and there is a coloring on the interval $[1,n]$ 
which satisfies the excellence condition then 
\[  r(\mathcal{E}(b); k) \geq \dfrac{b}{s}-\lc\dfrac{b}{s \cdot (n+1)}\rc+1.\]
\end{thm}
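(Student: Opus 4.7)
The proof should run closely parallel to Theorem \ref{low1}, replacing the stretching map used there with the reverse of the contracting map $f$ from the proof of Theorem \ref{up2}. First I would restate the target: since $m := \lc b/(s(n+1)) \rc$ and $s \mid b$, one can write $b = s((n+1)m - q)$ for a unique integer $q$ with $0 \leq q \leq n$, so the bound to prove is $r(\mathcal{E}(b);t) \geq nm - q + 1$; equivalently, I must exhibit a good $t$-coloring of $[1, nm-q]$ that avoids a monochromatic solution to \eqref{posb}.

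Given the coloring $\chi$ on $[1,n]$ satisfying the excellence condition, I would define
\[
\alpha(i) := \chi\!\left( n+1 - \lc \frac{i+q}{m} \rc \right), \qquad i \in [1, nm-q].
\]
This partitions $[1, nm-q]$ into at most $n$ consecutive blocks of sizes at most $m$; equivalently, every $y \in [1, nm-q]$ admits a unique decomposition $y = m(n+1-w) - q - e$ with $w \in [1,n]$ and $e \in [0,\, m-1]$, using the elementary fact that $m\lc a/m \rc - a \in [0,\, m-1]$ for every integer $a$.

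Next, suppose toward contradiction that $(y_1,\dots,y_k)$ is a monochromatic solution to $\mathcal{E}(b)$ under $\alpha$. Writing $y_i = m(n+1-w_i) - q - e_i$ with $w_i \in [1,n]$ and $e_i \in [0,\, m-1]$, and substituting into $\sum c_i y_i = y_k + b$, the choice $b = s((n+1)m - q)$ makes the bulk terms cancel and leaves
\[
m\Bigl(w_k - \sum_{i=1}^{k-1} c_i w_i\Bigr) \;=\; \sum_{i=1}^{k-1} c_i e_i - e_k.
\]
Since $\sum_{i=1}^{k-1}c_i = s+1$, the right-hand side lies in $[-(m-1),\, (m-1)(s+1)]$, so $j := w_k - \sum c_i w_i$ is an integer strictly greater than $-1$ and strictly less than $s+1$; that is, $j \in \{0,1,\dots,s\}$. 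Since all $w_i \in [1,n]$ and $\chi(w_i) = \alpha(y_i)$ is constant in $i$, $(w_1,\dots,w_{k-1},w_k)$ is then a monochromatic solution to $\sum c_i x_i + j = x_k$ under $\chi$, contradicting the excellence condition.

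The step requiring the most care is the boundary bookkeeping: when $q > 0$ the block for $w$ near $n$ is truncated (or absent if $q \geq m$), so one must verify that every $y \in [1, nm-q]$ does decompose as advertised with $w_i \in [1,n]$ and $e_i \in [0,m-1]$, and that the integer arithmetic indeed pins $j$ into $\{0,1,\dots,s\}$ rather than a slightly wider range. These are the analogs of the ``please check it yourself'' verifications in Theorems \ref{up1} and \ref{up2}; I anticipate no conceptual difficulty, just careful tracking of ceilings and signs.
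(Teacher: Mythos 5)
Your proof is correct, but it takes a different route from the paper's. The paper proves Theorem \ref{low2} by \emph{reduction} to Theorem \ref{low1}: it substitutes $x_i' = nm-q+1-x_i$ (reversing the interval $[1,nm-q]$), which after adding $-s(m-1)$ to both sides turns $\mathcal{E}(b)$ with $b=s[(n+1)m-q]$ into the negative-constant equation $\sum c_i x_i' = x_k' - s(m-1)$; it then takes the good coloring on $[1,mn]$ supplied by Theorem \ref{low1}, truncates to $[1,mn-q]$, and reverses. You instead build the coloring directly and verify goodness from scratch, mimicking the \emph{proof} of Theorem \ref{low1} rather than invoking its statement. Amusingly, the two constructions produce the identical coloring: the paper's reversed-and-truncated coloring is $\chi(\lc (mn-q+1-i)/m \rc) = \chi(n - \lf (i+q-1)/m \rf) = \chi(n+1-\lc (i+q)/m\rc)$, which is exactly your $\alpha$. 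What your version buys is a cleaner and fully self-contained verification: the decomposition $y_i = m(n+1-w_i)-q-e_i$ with $e_i \in [0,m-1]$ makes the cancellation transparent, and your bound $-(m-1) \leq mj \leq (s+1)(m-1)$ pins $j$ into exactly $\{0,1,\dots,s\}$, which matches the excellence condition precisely (the paper's own Theorem \ref{low1} proof states the looser-looking range $0 \leq j \leq \lc sb/(b+s)\rc$ and glosses over why $j \geq 0$). What the paper's reduction buys is brevity and the conceptual point that the positive-$b$ case is literally the mirror image of the negative-$b$ case. The boundary bookkeeping you flag does check out: for every $y \in [1,nm-q]$ one has $y+q \in [1+q, nm]$, so $\lc (y+q)/m\rc \in [1,n]$ and hence $w \in [1,n]$, with truncated or empty blocks near $w=n$ causing no harm since the argument never needs every block to be occupied.
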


\begin{proof} We invoke 
the result of Theorem \ref{low1} by rewriting \eqref{posb}
in the form of \eqref{negb}. 
Since $s|b,$ we can write $b$ in the form 
$b =s\left[ (n+1)m-q \right]$ 
where $m=\lc \dfrac{b}{s \cdot (n+1)} \rc$
and $0 \leq q \leq n.$ Then
$r= \dfrac{b}{s}-\lc\dfrac{b}{s \cdot (n+1)}\rc+1
= (n+1)m-q-m+1 = nm-q+1.$

We show that there is a ``good coloring'' on the interval 
$[1, r-1] = [ 1,  nm-q ]$ to \eqref{posb}.

First we rewrite \eqref{posb} as
\[ c_1x_1+c_2x_2+\dots+ c_{k-1}x_{k-1} = x_k + s\left[ (n+1)m-q \right]. \]
We then rewrite this equation again as
\begin{align*}
& c_1\left[(n+1)m-q-x_1\right]+c_2\left[(n+1)m-q-x_2\right]+\dots
+ c_{k-1}\left[(n+1)m-q-x_{k-1}\right] \\
&= (n+1)m-q-x_k. 
\end{align*}
Next we add $-s(m-1)$ on both sides of the equation,
\begin{align*}
& c_1\left[nm-q+1-x_1\right]+c_2\left[nm-q+1-x_2\right]+\dots
+ c_{k-1}\left[nm-q+1-x_{k-1}\right] \\ 
&= [nm-q+1-x_k ]-s(m-1). 
\end{align*}
We let $x'_i = nm-q+1-x_i$ for each $i.$ 
The reader sees that 
$x'_i$ is $x_i$ after reversing the interval $[1, nm-q].$  
The equation after substitution is
\begin{equation} \label{last} 
c_1x'_1+c_2x'_2+\dots+ c_{k-1}x'_{k-1} = x'_k -s(m-1). 
\end{equation}
The next step is clear. We invoke the result from Theorem \ref{low1} (!) that
there is a good coloring $\alpha$ on the interval $[1,mn]$ to \eqref{last}. 
We can then make a good coloring to \eqref{posb} from this interval by taking the
elements $1$ to $mn-q$ of $\alpha$ and reverse the interval.
\end{proof}

Below are some applications of Theorems \ref{up2} and \ref{low2}.

\begin{cor} Consider the equation $\mathcal{E}(b)$ of the form
\[  x_1+x_2+\dots+ x_{k-1} = x_k + b , \;\ \;\ \text{with } k \geq 2, 
\;\ b \geq 1 \text{ and } (k-2)|b.   \]
We let $m= b/(k-2).$ Then
\[ r(\mathcal{E}(b);2) = m- \lc \dfrac{m}{k^2-k-1} \rc+1. \]
\end{cor}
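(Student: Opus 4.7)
My plan is to run the argument in direct analogy with Corollary \ref{cor1}, but with Theorems \ref{up2} and \ref{low2} in place of Theorems \ref{up1} and \ref{low1}. Since every $c_i = 1$ here, we have $s = \sum_{i=1}^{k-1} c_i - 1 = k-2$, so $b/s = m$. The classical value $R_C(2) = r(x_1+x_2+\dots+x_{k-1}=x_k;2) = k^2-k-1$ (Theorem 8.23 of \cite{LR}) is available, and plugging it into Theorem \ref{up2} immediately yields the upper bound
\[ r(\mathcal{E}(b);2) \;\leq\; m - \lc \frac{m}{k^2-k-1} \rc + 1. \]

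For the lower bound I would reuse the very coloring that already appeared in the proof of Corollary \ref{cor1}, namely
\[ \chi = [\,1^{k-2},\ 2^{(k-1)(k-2)},\ 1^{k-2}\,], \]
whose length is $(k-2) + (k-1)(k-2) + (k-2) = (k-2)(k+1) = k^2 - k - 2$, and which is already known there to satisfy the excellence condition. Setting $n = k^2-k-2$, so that $n+1 = k^2-k-1 = R_C(2)$, Theorem \ref{low2} gives
\[ r(\mathcal{E}(b);2) \;\geq\; m - \lc \frac{m}{k^2-k-1} \rc + 1, \]
which meets the upper bound on the nose.

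There is no real new obstacle: the heavy lifting has already been done in Theorems \ref{up2} and \ref{low2} and in the verification of the excellence condition for $\chi$ inside Corollary \ref{cor1}. The only point that deserves a moment's reflection is that the length of $\chi$ is exactly $R_C(2)-1$, which is precisely the value of $n$ needed so that the ceiling $\lceil m/(s(n+1))\rceil$ appearing in the lower bound matches the ceiling $\lceil m/(s\cdot R_C(2))\rceil$ appearing in the upper bound. Any coloring shorter than $R_C(2)-1$ would leave a gap between the two bounds; the tightness of the Schur-type coloring for $k-1$ summands is exactly what closes it.
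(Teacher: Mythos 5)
Your proposal is correct and matches the paper's own (very terse) proof, which simply says to repeat the argument of Corollary \ref{cor1} with Theorems \ref{up2} and \ref{low2} in place of Theorems \ref{up1} and \ref{low1}; you have filled in exactly the intended details, including the reuse of the coloring $[1^{k-2},2^{(k-1)(k-2)},1^{k-2}]$ of length $R_C(2)-1$ so that the two bounds coincide.
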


\begin{proof}
The proof is the same as for Corollary \ref{cor1} 
except that this time we apply Theorems 
\ref{up2} and \ref{low2}. 
\end{proof}
Note that the above result when $k=3$ was mentioned in \cite{BL}.

\begin{cor}
For $b \geq 1$, 
\[ r(x+y-z=b; 3) = b-\lc\frac{b}{14} \rc +1. \]
\end{cor}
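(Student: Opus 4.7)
The plan is to specialize Theorems \ref{up2} and \ref{low2} to the equation $x+y=z+b$. Here $k=3$ and $c_1=c_2=1$, so $s=c_1+c_2-1=1$; this makes the divisibility hypothesis $s\mid b$ automatic and collapses the two bounds to $b-\lceil b/R_C(3)\rceil+1$ (upper) and $b-\lceil b/(n+1)\rceil+1$ (lower). The work therefore reduces to identifying $R_C(3)$ and producing a coloring of $[1,R_C(3)-1]$ that satisfies the excellence condition.

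First I will invoke the classical Schur result $R_{[1,1]}(3)=s(3)=14$ recalled in the introduction. Substituting $R_C(3)=14$ into Theorem \ref{up2} immediately gives
\[ r(x+y-z=b;3) \leq b - \lceil b/14 \rceil + 1. \]

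For the matching lower bound I will exhibit the standard Schur extremal coloring $\chi=[1,2,2,1,3,3,3,3,3,1,2,2,1]$ on $[1,13]$ and verify that it satisfies the excellence condition, i.e.\ that none of its color classes contains a monochromatic solution to $x+y+j=z$ for $j\in\{0,1\}$ (since $s=1$, only these two values matter). The case $j=0$ is precisely the Schur property that makes $\chi$ extremal. For $j=1$ I would inspect the three color classes $\{1,4,10,13\}$, $\{2,3,11,12\}$, $\{5,6,7,8,9\}$ directly. Once the excellence condition is confirmed, Theorem \ref{low2} applied with $n=13$ yields
\[ r(x+y-z=b;3) \geq b - \lceil b/14 \rceil + 1, \]
matching the upper bound.

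The main (and essentially only) nontrivial task is the finite verification that $\chi$ also avoids monochromatic solutions to $x+y+1=z$: for the class $\{5,6,7,8,9\}$ any sum $x+y+1\geq 11$ overshoots; for $\{2,3,11,12\}$ the constraint $x+y+1\leq 12$ forces $x,y\in\{2,3\}$, giving $x+y+1\in\{5,6,7\}$, none in the class; and for $\{1,4,10,13\}$ one checks the handful of candidate sums from $\{1,4,10\}+\{1,4,10\}+1$ and confirms none lies back in the class. Everything else is routine bookkeeping inside the two theorems.
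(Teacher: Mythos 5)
Your proposal is correct and follows the same route as the paper: the paper's proof likewise consists of checking that the Schur coloring $[1,2,2,1,3,3,3,3,3,1,2,2,1]$ satisfies the excellence condition (here $s=1$, so only $j=0,1$ need checking) and then applying Theorems \ref{up2} and \ref{low2} with $R_C(3)=14$ and $n=13$. Your explicit verification of the $j=1$ case for the three color classes is exactly the finite check the paper leaves to the reader.
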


\begin{proof}
The proof is straight forward. It can be checked that the original coloring
$[1, 2, 2, 1, 3, 3, 3, 3, 3, 1, 2, 2, 1]$ satisfies
the excellence condition. Then we apply Theorems
\ref{up2} and \ref{low2}.
\end{proof}

This result was a part of Theorem 9.15 in \cite{LR}. 
Although it was wrongly claimed that
$ r(x+y-z=b; 3) = b-\lc\frac{b-1}{14} \rc. $

For the situation when the equation $\mathcal{E}(0)$ given by
\[
c_1x_1+c_2x_2+\dots+ c_{k-1}x_{k-1} = x_k, \;\ \;\ c_i > 0,\] 
is not $t$-regular, the trivial bounds of the Rado numbers to
\[
c_1x_1+c_2x_2+\dots+ c_{k-1}x_{k-1} = x_k+b, \;\ \;\ b > 0, \, s|b,\] 
are
\[ \lc \dfrac{b+1}{s+1} \rc \leq r(\mathcal{E}(b); t) \leq \dfrac{b}{s}, \;\ \text{ for any } t \geq 1.\]
The mono solution for the upper bound arises from the tuple 
$(\frac{b}{s},\frac{b}{s},\dots,\frac{b}{s})$.


\section{Final Remarks}
So far, our results were obtained by checking 
the excellence condition of each good coloring.
For the 2-coloring and 3-coloring,
it seems that there are always colorings 
of length $n = R_C(t)-1$ to the equations 
\[  c_1x_1+c_2x_2+\dots+ c_{k-1}x_{k-1} = x_k, \;\ \;\  \text{ where } c_i > 0 \]
that satisfy the excellence condition. 
Thus it makes sense to make the following conjecture.

\begin{conj} For $t =2$ or $3$, fix constants $c_1, c_2, \dots, c_{k-1}.$
Consider the equation  $\mathcal{E}(\tilde{b})$ of the form
\[
c_1x_1+c_2x_2+\dots+ c_{k-1}x_{k-1} = x_k +\tilde{b}, \;\ \;\ 
\text{ where } c_i > 0.
\]
Let $s = \sum_{i=1}^{k-1} c_i-1$. If $s|\tilde{b}$ 
and $\mathcal{E}(0)$ is $t$-regular then 
\[  r(\mathcal{E}(\tilde{b}); t) = 
\begin{cases} 
\dfrac{\tilde{b}}{s}-\lc\dfrac{\tilde{b}}{s \cdot R_C(t)}\rc+1,  & \text{ for } \tilde{b} >0, \\
 -\dfrac{\tilde{b}}{s}\cdot (R_C(t)-1) +R_C(t)     , & \text{for } \tilde{b}< 0.
\end{cases}\]
\end{conj}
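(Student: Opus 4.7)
The conjecture's upper bounds in both cases are already furnished by Theorems \ref{up1} and \ref{up2}. A direct calculation (as noted in the remark after Theorem \ref{low1}) shows that the lower bounds from Theorems \ref{low1} and \ref{low2} match these upper bounds precisely when one can take $n = R_C(t) - 1$. Thus the plan is to reduce the conjecture to the following single combinatorial claim: for $t \in \{2, 3\}$ and any positive integers $c_1, \ldots, c_{k-1}$ for which $\mathcal{E}(0)$ is $t$-regular, there exists a coloring of $[1, R_C(t) - 1]$ satisfying the excellence condition, i.e., avoiding monochromatic solutions to $c_1 x_1 + \cdots + c_{k-1} x_{k-1} + j = x_k$ for every $0 \leq j \leq s = \sum_{i=1}^{k-1} c_i - 1$.

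For $t = 2$, the proof of Corollary \ref{cor1} already exhibits the palindromic excellent coloring $[1^{k-2}, 2^{(k-1)(k-2)}, 1^{k-2}]$ in the unit-coefficient case. My guess is that for general positive coefficients an extremal 2-coloring of the analogous palindromic shape $[1^a, 2^b, 1^a]$ exists, where $a$ and $b$ are determined by $(c_1, \ldots, c_{k-1})$. Excellence could then be verified directly: any putative mono-blue solution forces $x_k \geq (a+1)(s+1) + j$, which already exceeds the blue block's upper end $a+b$ whenever the $j=0$ case does (so blue excellence is automatic from goodness, since adding $j \geq 0$ only strengthens the inequality); a putative mono-red solution can be ruled out by a case analysis on whether each $x_i$ lies in the lower red block $[1, a]$ or the upper red block $[a+b+1, 2a+b]$, parallel to the $j = 0$ verification.

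For $t = 3$ the extremal colorings are less uniform in shape, so I would first gather experimental evidence (via the accompanying \texttt{Schaal} program) across families of coefficient vectors to pin down a conjectural structure---likely a longer palindromic pattern such as $[1^{a_1} 2^{a_2} 3^{a_3} 1^{a_4} 2^{a_5} 1^{a_6}]$---and then verify excellence block by block. An alternative line of attack is a \emph{modification lemma}: show that any extremal good coloring of $\mathcal{E}(0)$ can be locally recolored at a few positions so as to satisfy the excellence condition without creating mono solutions to $\mathcal{E}(0)$.

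The main obstacle is the absence of a closed-form description of $R_C(t)$, or of its extremal colorings, in terms of arbitrary coefficients $(c_1, \ldots, c_{k-1})$, so we have no canonical candidate coloring to test. Moreover the excellence condition imposes $s+1 = \sum c_i$ separate no-mono-solution constraints, a number that grows with the coefficients, so uniform verification is delicate. A fully general proof will likely require either (a) a structural theorem showing that the mere extremality of a good coloring automatically forces excellence, perhaps via a minimality or pigeonhole argument, or (b) a detailed case analysis indexed by the multiset $\{c_1, \ldots, c_{k-1}\}$.
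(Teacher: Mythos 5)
The statement you are trying to prove is labeled as a \emph{conjecture} in the paper: the author gives no proof, only the observation (after Theorem \ref{low1}) that the bounds of Theorems \ref{up1}/\ref{up2} and \ref{low1}/\ref{low2} coincide whenever there exists a coloring of length $n = R_C(t)-1$ satisfying the excellence condition, together with computational evidence for $t=2,3$ in specific cases. Your reduction of the conjecture to that single combinatorial claim is correct and is exactly the paper's own framing --- one can check that with $n = R_C(t)-1$ the lower bound $(\tfrac{b}{s}+1)n+1$ equals the upper bound $(\tfrac{b}{s}+1)R_C(t)-\tfrac{b}{s}$ in the negative case, and similarly $\lc b/(s(n+1))\rc = \lc b/(sR_C(t))\rc$ in the positive case. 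So the skeleton of your argument is sound and matches the intended route.

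However, what you have written is not a proof but a research plan, and the gap is precisely the part that makes the statement a conjecture rather than a theorem: you never establish that an excellent coloring of length $R_C(t)-1$ exists for arbitrary positive coefficients $c_1,\dots,c_{k-1}$ and $t\in\{2,3\}$. For $t=2$ your proposed palindrome $[1^a,2^b,1^a]$ is a guess with unspecified $a,b$; your claim that ``blue excellence is automatic from goodness'' is plausible for the blue block but the red case analysis is only sketched, and without a closed form for $R_C(2)$ in terms of general $(c_1,\dots,c_{k-1})$ you cannot even confirm that your candidate has the required length $R_C(2)-1$. For $t=3$ you explicitly defer to experiment. You are candid about these obstacles, which is to your credit, but as it stands the proposal proves nothing beyond what the paper already records (the upper bounds and the conditional lower bounds); the essential existence claim remains open, and no argument you give closes it.
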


For $t$-colorings where $t \geq 4$,
our Maple program is too 
slow to give any tangible observations. 
A faster program could used to verify 
whether this conjecture still holds.

Lastly, the reader might wonder about the other type of equations that
we did not consider, i.e. 
\[  \sum_{i=1}^{k-1}c_ix_i= c_kx_k+b, \;\ \;\ 
\text{where }  c_i \geq 1 , \;\ \text{ for } 1 \leq i \leq k-1 \text{ and } c_k \geq 2. \]
It turns out that the Rado numbers of these equations 
exhibit more complicated patterns from those discovered in this paper. 


\end{document}